\newcommand{\pic}[2]{\raisebox{-.5\height}{\includegraphics[scale=#2]{#1}}}
\def\smoothing{\pic{smoothing}{.250}}
\def\noextraedges{\pic{noextraedges}{.150}}
\def\extraedgesone{\pic{extraedgesone}{.150}}
\def\pretzel{\pic{pretzel}{.400}}
\def\Apretzel{\pic{Apretzel}{.400}}
\def\Bpretzel{\pic{Bpretzel}{.400}}
\newcommand{\bc}{\begin{center}}
\newcommand{\ec}{\end{center}}
\newcommand{\be}{\begin{equation}}
\newcommand{\ee}{\end{equation}}
\newcommand{\beqn}{\begin{eqnarray*}}
\newcommand{\eeqn}{\end{eqnarray*}}
\newtheorem{theorem}{Theorem}
\newtheorem{corollary}[theorem]{Corollary}
\newtheorem{lemma}[theorem]{Lemma}
\newtheorem{remark}{Remark}
\renewenvironment{proof}[1][Proof]{\textit{#1.} }{\hfill \rule{0.5em}{0.5em}}
\begin{document}

\title{A geometric characterization of the upper bound for the span of the Jones polynomial}
\author{J. Gonz\'alez-Meneses and P. M. G. Manch\'on}
\maketitle

\begin{abstract}
Let $D$ be a link diagram with $n$ crossings, $s_A$ and $s_B$ its extreme states and $|s_AD|$ (resp. $|s_BD|$) the number of simple closed curves that appear when smoothing $D$ according to $s_A$ (resp. $s_B$). We give a general formula for the sum $|s_AD|+|s_BD|$ for a $k$-almost alternating diagram $D$, for any $k$, characterizing this sum as the number of faces in an appropriate triangulation of an appropriate surface with boundary. When $D$ is dealternator connected, the triangulation is especially simple, yielding $|s_AD|+|s_BD|=n+2-2k$. This gives a simple geometric proof of the upper bound of the span of the Jones polynomial for dealternator connected diagrams, a result first obtained by Zhu \cite{Zhu}. Another upper bound of the span of the Jones polynomial for dealternator connected and dealternator reduced diagrams, discovered historically first by Adams et al \cite{Adams et al}, is obtained as a corollary. As a new application, we prove that the Turaev genus is equal to the number $k$ of dealternator crossings for any dealternator connected diagram.
\end{abstract}

\vspace{0.3cm}

\noindent {\bf Keywords} $k$-almost alternating diagram, circle number, surgery, dealternator connected diagram, dealternator reduced diagram, Jones polynomial, span.

\section{Introduction}

Every link diagram $D$ has two related families of circles, $s_AD$ and $s_BD$, obtained from $D$ by applying, respectively, $A$-smoothing or $B$-smoothing to each of its crossings, as in Figure~\ref{Fsmoothing}. We denote by $|s_AD|$ (respectively $|s_BD|$), the number of circles in $s_AD$ (resp. $s_BD$). There is a well known upper bound~\cite{Lickorish} for the span of the Kauffman bracket $\langle D \rangle$, that is, the difference between the extreme degrees of $\langle D\rangle$, for a link diagram $D$ with $n$ crossings:
$$
{\rm span}(\langle D \rangle) \leq 2n+2(|s_AD|+|s_BD|)-4.
$$
\begin{figure}[ht!]
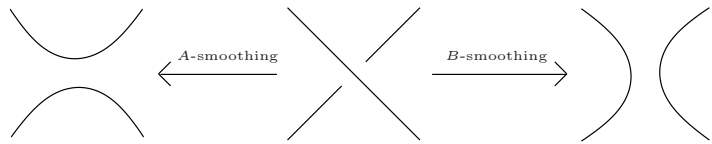

\labellist
\small
\pinlabel {\tiny $A$-smoothing} at 330 130
\pinlabel {\tiny $B$-smoothing} at 735 130
\endlabellist
  \begin{center}
\smoothing
  \end{center}
\caption{$A$ and $B$-smoothing of a crossing}\label{Fsmoothing}
\end{figure}

We will refer to the sum $|s_AD|+|s_BD|$ as the {\it circle number} of the diagram~$D$. The aim of this paper is to provide a general formula of the circle number, characterizing it as the number of faces of an appropriate triangulation of an appropriate surface.

\vspace{0.3cm}

For a connected diagram $D$, its projection yields a triangulation of $S^2$, where we allow the faces to be polygons with at least one edge, not just triangles. As every vertex in this triangulation has valence 4, its faces can be coloured white and black giving a chessboard colouring, which means that any edge is always boundary of both colours. When $D$ is alternating, the components of $s_AD$ are the (straightened) boundaries of the (let say) white faces, and the components of $s_BD$ are the boundaries of the black faces, hence $|s_AD|+|s_BD|$ is the total number of faces in the triangulation. We can count the Euler characteristic of the sphere $S^2$:
$$
n-2n+(|s_AD|+|s_BD|)=2,
$$
therefore $|s_AD|+|s_BD|=n+2$ for any connected alternating diagram with $n$ crossings.

\vspace{0.3cm}

In particular, when $D$ is connected and alternating, ${\rm span}(\langle D \rangle) \leq 4n$. If $D$ is in addition reduced, then we have the equality ${\rm span}(\langle D \rangle) =4n$ (see \cite{Lickorish}).

\vspace{0.3cm}

What happens for non-alternating diagrams? To understand the answer, we look at the concept of $k$-almost alternating diagram, a notion introduced by Adams~\cite{AdamsBook}. A diagram $D$ is said to be {\it $k$-almost alternating} if it has a set of $k$ crossings (called dealternators), and not less than $k$, such that $D$ is alternating if we switch all these crossings. Every diagram is $k$-almost alternating for exactly one $k\geq 0$. Of course, the $0$-almost alternating diagrams are the alternating diagrams. A $1$-almost alternating diagram is just called an almost alternating diagram.

\vspace{0.3cm}

Other two definitions are required. To simplify notation, we will identify each crossing of $D$ with its corresponding point in the projection, hence every dealternator is identified with a point of $S^2$. The diagram $D$ is called {\it dealternator connected} \cite{Adams et al} if there is no simple closed curve in $S^2$ intersecting (transversely) the projection of $D$ in a nonempty set of  dealternators. Equivalently, each diagram $D_i$ ($i=1, \dots , 2^k$) obtained by smoothing all $k$ dealternators in every possible way, is connected. A diagram $D$ is called dealternator reduced \cite{Adams et al} if there is no simple closed curve in $S^2$ intersecting (transversely) the projection of $D$ in exactly one non-dealternator crossing and possibly in some dealternators. Equivalently, each diagram $D_i$ ($i=1, \dots , 2^k$) obtained by smoothing all $k$ dealternators in every possible way, is reduced.

\vspace{0.3cm}

In \cite[Theorem 4]{Zhu}, Zhu proves that ${\rm span}(\langle D \rangle) \leq 4(n-k)$ if $D$ is a dealternator connected $k$-almost alternating diagram with $n$ crossings.

\vspace{0.3cm}

In \cite[Theorem 4.4]{Adams et al}, Adams et al. proved that ${\rm span}(\langle D \rangle ) \leq 4(n-k-2)$ if $D$ is a dealternator connected and dealternator reduced $k$-almost alternating diagram ($k\geq 1$) with $n$ crossings.  Historically, this result was proved before Zhu's theorem above.

\vspace{0.3cm}

Indeed, all these results provide the corresponding upper bound for the span of the Jones polynomial $V_L(t)$ of the link $L$ represented by $D$, since $ {\rm span}(\langle D \rangle ) = 4\  {\rm span}(V_L(t))$.

\vspace{0.3cm}

The main achievement of this paper is to give a geometrical interpretation of the circle number $|s_AD|+|s_BD|$ of a $k$-almost alternating diagram, as the number of faces of an appropriate triangulation of an appropriate surface with boundary and Euler characteristic $2-3k$. This construction generalizes the situation described above for alternating diagrams, and provides nice geometric proofs of the results of Zhu and Adams.

\vspace{0.3cm}

We remark that, in \cite{Turaev}, Turaev followed a similar topological approach in order to count the circle number in the case of alternating diagrams, using a different surface, sometimes called the Turaev surface in the literature. See \cite[Section 9.4]{Peter} for a nice synthesis of his work. In \cite[Corollary 7.3]{mosca}, Dasbach et al. proved that ${\rm span}(\langle D \rangle) \leq 4(n-g)$ where $g$ is the genus of the corresponding Turaev surface. In general, $g\leq k$ for a $k$-almost alternating diagram \cite{Abe}. In the case of dealternator connected diagrams we will prove that $g=k$, in light of which the result of Zhu would also follow from \cite[Corollary 7.3]{mosca}.

\vspace{0.3cm}

The paper is organized as follows: In Section~\ref{SectionSurface} we construct a surface $S$ associated to a $k$-almost alternating diagram, and a suitable graph $\Gamma_D$ in $S$. In Section~\ref{SectionDealternatorConnected} we see that if $D$ is dealternator connected, the graph $\Gamma_D$ determines a triangulation of $S$. If $D$ has $n$ crossings, this immediately yields the equality $|s_AD|+|s_BD| = n+2-2k$ (Theorem \ref{AC-case}), obtaining a simple geometric proof of the result of Zhu. From this we deduce the result of Adams et al.~\cite{Adams et al}, using a simple argument by induction. This is done in Section~\ref{SectionDealternatorConnectedAndDealternatorReduced}. The general case is treated in Section~\ref{SectionGeneralCase}: If $S\backslash \Gamma_D$ has $r$ connected components (that we will call regions) and $s$ is the rank of its first homology group, we show that $|s_AD|+|s_BD|=r+s$ (Theorem~\ref{Theorem_rs}). We also give a formula for the circle number in terms of the number of regions ($r$), crossings ($n$) and dealternators ($k$). Namely $|s_AD| + |s_BD| = 2k + 2r - n - 2$ (Theorem~\ref{Theorem_rk}). We finish with an example of these results applied to a pretzel diagram, specifying how to draw the regions of $S\backslash \Gamma_D$ in the  plane.

\vspace{.3cm}

\noindent {\bf Acknowledgements:}  We are grateful to Hugh R. Morton for several helpful comments on a previous version of this paper.

\section{The surface and graph associated to a diagram} \label{SectionSurface}

Suppose that $D$ is a $k$-almost alternating diagram. In this section we construct a surface of Euler characteristic $2-3k$ such that $|s_AD|+|s_BD|$ is the number of faces for a suitable triangulation.

\vspace{0.3cm}

The construction of the surface $S$ is made by performing the following local surgery to $S^2$ around each dealternator of $D$. Take a small closed disc $O$ around a dealternator crossing. Let $a,b,c,d$ be the four points in which the boundary of $O$ cuts transversally the diagram~$D$, say counterclockwise. The boundary of $O$ is the union of four arcs $ab$, $bc$, $cd$ and $da$. Consider two copies of the band $[0,1]\times [0,1]$. Delete the interior of the disc $O$ and glue the two bands, identifying $\{ 0 \} \times [0,1]$ and $\{ 1 \} \times [0,1]$ of the first band with $ab$ and $dc$ respectively, and $\{ 0 \} \times [0,1]$ and $\{ 1 \} \times [0,1]$ of the second band with $bc$ and $ad$ respectively (see Figure~\ref{Fsurgery}).
\begin{figure}[ht!]
\begin{center}
 \pic{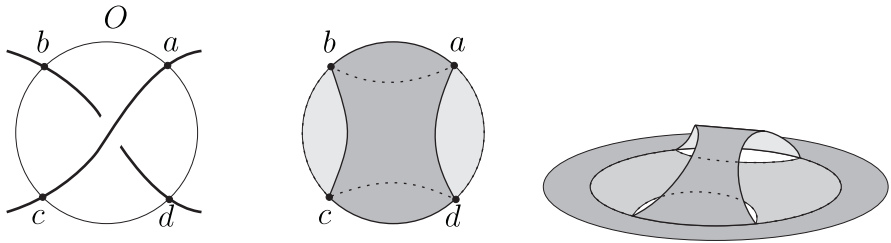}{1}
\end{center}
\caption{Local surgery around each dealternator crossing}\label{Fsurgery}
\end{figure}

What we are doing locally around each dealternator crossing is to add a hollow handle minus a disc (see Figure~\ref{Fhandle}), hence we obtain a surface $S$ which is the connected sum of $k$ torus minus the interior of $k$ discs (Figure~\ref{Fsurface}). In particular the Euler Characteristic of our surface is $2-3k$. Indeed, before deleting the interior of the discs, we have a genus $k$ handlebody, hence its Euler characteristic is $2-2k$. Deleting the $k$ discs, we get a final Euler \mbox{characteristic $2-3k$}.

\begin{figure}[ht!]
\begin{center}
 \pic{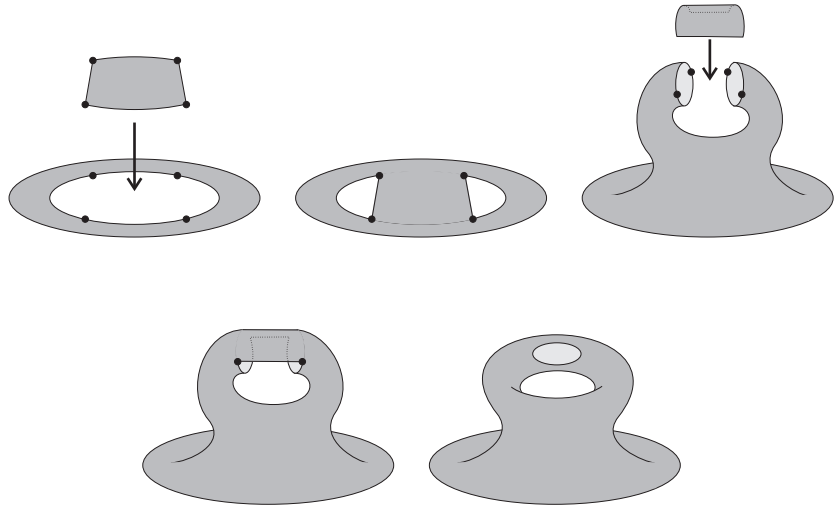}{1}
\end{center}
\caption{Local surgery adds a hollow handle minus a disc}\label{Fhandle}
\end{figure}

\begin{figure}[ht!]
\begin{center}
 \pic{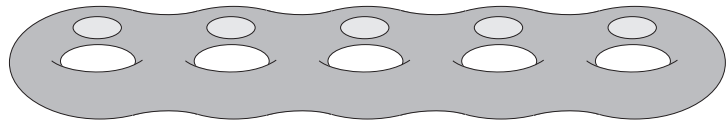}{1}
\end{center}
\caption{Type of the resulting surface, with genus $k$ and $k$ boundary components}\label{Fsurface}
\end{figure}

\vspace{0.3cm}

\begin{remark}\label{notaTuraev} In \cite{Turaev}, Turaev followed a similar topological approach in order to count the circle number in the case of alternating diagrams, using a different surface, sometimes called the Turaev surface in the literature (see also \cite{Peter}, Section 9.4).  Following \cite{mosca}, the Turaev genus of a diagram~$D$ is by definition the genus of its corresponding Turaev surface.
\end{remark}

\vspace{0.3cm}

Recall that the projection of the diagram $D$ is a graph on $S^2$, which determines a triangulation of $S^2$ admitting a chessboard colouring. In the surface $S$, we can define a similar graph, that we denote $\Gamma_D$, in the following way: Start with the sphere $S^2$ and the projection of $D$. Consider the small circle around a dealternator, along which local surgery will be applied. Recall that this circle intersects the projection of $D$ in four points, $a$, $b$, $c$ and $d$, which are interior points of their corresponding edges. Now remove the disc and glue the two bands as explained above. We complete the graph $\Gamma_D$ by considering $a$, $b$, $c$ and $d$ as vertices, and adding four edges corresponding to the segments $[0,1]\times\{i\}$ for $i=0,1$ (see Figure~\ref{Ftriangulation}). In other words, the union of the four new vertices and the four new edges is precisely the boundary component of $S$ corresponding to the given dealternator.

\begin{figure}[ht!]
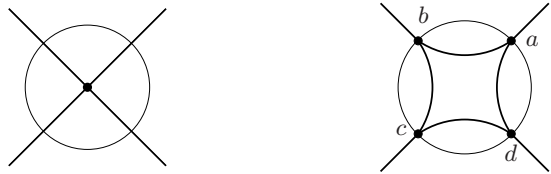

\labellist
\small
\pinlabel $a$ at 1413 340
\pinlabel $b$ at 1120 410
\pinlabel $c$ at 1060 100
\pinlabel $d$ at 1355 40
\endlabellist
  \begin{center}
\noextraedges \qquad \qquad \qquad \qquad \extraedgesone
  \end{center}
\caption{Each dealternator produces $3$ extra vertices and $4$ extra edges} \label{Ftriangulation}
\end{figure}

\vspace{.3cm}

In particular, it follows that the number of vertices in $\Gamma_D$ is $n+3k$, and the number of edges is $2n+4k$.

\vspace{.3cm}

Notice that the obtained graph $\Gamma_D$ does not yield, in general, a triangulation of $S$, since the resulting regions are not necessarily homeomorphic to a disc: a property which is equivalent to $D$ being dealternator connected. This observation will allow us to obtain a very simple proof of Zhu's result~\cite{Zhu}, as we will see in Section~\ref{SectionDealternatorConnected}. If $D$ is dealternator connected and in addition dealternator reduced, our construction will also give a simple proof of the result of Adam et al.~\cite{Adams et al}, which will be seen in Section~\ref{SectionDealternatorConnectedAndDealternatorReduced}.

\section{When $D$ is a dealternator connected diagram} \label{SectionDealternatorConnected}

If $D$ is a dealternator connected diagram, the construction of $S$ and of $\Gamma_D$ immediately determines the circle number in terms of the number of crossings and dealternators.

\begin{theorem}\label{AC-case}
If $D$ is a dealternator connected, $k$-almost alternating diagram with $n$ crossings, then
$$
|s_AD|+|s_BD|=n+2-2k.
$$
\end{theorem}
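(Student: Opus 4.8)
The plan is to turn the construction of $\Gamma_D$ on $S$ into a genuine cell decomposition, read off the number of faces by Euler's formula, and then identify that number with the circle number. First I would invoke the observation made just before this section: the regions of $S\backslash\Gamma_D$ are all open discs precisely when $D$ is dealternator connected. Hence, in our hypothesis, $\Gamma_D$ is a bona fide triangulation of $S$ and its number of faces $F$ is a legitimate cell count. Using the vertex count $V=n+3k$, the edge count $E=2n+4k$ and $\chi(S)=2-3k$ already established, Euler's formula $V-E+F=\chi(S)$ gives $F=(2-3k)-(n+3k)+(2n+4k)=n+2-2k$; this computation is routine.

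Next I would transport the chessboard colouring of the projection on $S^2$ to $\Gamma_D$ on $S$. The $n-k$ surviving crossings are $4$-valent interior vertices and are coloured exactly as before. Each new vertex $a,b,c,d$ lies on $\partial S$ and is incident to only one interior edge (its two band edges lie on the boundary of $S$); since every edge of the diagram separates the two colours, this single interior edge locally separates exactly two faces of opposite colour, so the colouring extends consistently. The geometric point I would stress is that the two bands of each surgery join regions of the \emph{same} colour: the four quadrants around a dealternator alternate in colour with opposite quadrants agreeing, and the surgery glues the two opposite arcs bounding the white quadrants together and the two opposite arcs bounding the black quadrants together. Thus one band merges the two white quadrants and the other merges the two black quadrants, and $S\backslash\Gamma_D$ inherits a well-defined two-colouring with $F=(\text{white faces})+(\text{black faces})$.

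The heart of the argument is then to identify the white faces with the circles of $s_AD$ and the black faces with those of $s_BD$. Let $D'$ be the alternating diagram obtained by switching the $k$ dealternators; as recalled in the introduction, for $D'$ the all-$A$ circles bound the white faces and the all-$B$ circles bound the black faces. Since switching a crossing interchanges its $A$- and $B$-smoothings, the all-$A$ state of $D$ agrees with that of $D'$ at the $n-k$ alternating crossings but, at each dealternator, performs the smoothing that merges the two equally-coloured quadrants, which is exactly what the corresponding band does on $S$. I would therefore argue that the connected white regions of $S\backslash\Gamma_D$ are in bijection with the circles of $s_AD$, and the black regions with those of $s_BD$, whence $F=|s_AD|+|s_BD|=n+2-2k$. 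The main obstacle is precisely this last bijection: one must verify that each band realises the switched smoothing as a \emph{clean} merge, joining two distinct regions into one (lowering the circle count by exactly one) rather than reconnecting a region to itself (which would instead raise it). This is exactly where dealternator connectedness is indispensable, since it is what forces every region to be a disc and every merge to be clean, so that the number of faces of each colour equals the number of state circles of the corresponding type.
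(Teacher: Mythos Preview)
Your proof is correct and follows the paper's approach: use dealternator connectedness to guarantee that $\Gamma_D$ gives a genuine cell decomposition of $S$, then read off the face count from Euler's formula with the vertex, edge, and Euler-characteristic counts already established. The only difference is that the paper simply asserts that the number of faces equals the circle number (treating this as built into the construction of $S$ and $\Gamma_D$), whereas you spell out the chessboard colouring on $S$ and the band--smoothing correspondence to justify that identification explicitly; this extra care is fine, though once you know every region is a disc the bijection between faces and boundary circles is immediate, so your ``clean merge'' discussion is really just a rephrasing of the disc condition you already invoked.
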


\begin{proof}
The definition of dealternator connected diagram means precisely that each region determined on the surface $S$ by $\Gamma_D$ is a disc. In other words, $\Gamma_D$ determines a triangulation of $S$, whose number of faces is precisely the circle number of $D$. Therefore, since the number of vertices is $n+3k$, the number of edges is $2n+4k$, and the Euler characteristic of $S$ is $2-3k$, we immediately obtain the formula:
$$
   (n+3k)-(2n+4k)+(|s_AD|+|s_BD|)=2-3k,
$$
from which the result follows.
\end{proof}

\vspace{.3cm}

This implies the result of Zhu mentioned in the introduction.

\begin{corollary}{\rm \cite[Theorem 4]{Zhu}}
If $D$ is a dealternator connected, $k$-almost alternating diagram with $n$ crossings, then
$$
{\rm span}(\langle D \rangle) \leq 4(n-k).
$$
\end{corollary}

\begin{proof}
It is well known~\cite{extreme} that if we denote $M= n+2|s_AD|-2$ and $m= -n-2|s_BD|+2$, then the maximal (resp. minimal) degree of the Kauffman bracket of the diagram $D$ is at most $M$ (resp. at least $m$). Hence $\mbox{span}(\langle D \rangle) \leq M-m = 2n+2(|s_AD|+|s_BD|)-4$.  As, by Theorem~\ref{AC-case}, $|s_AD|+|s_BD|=n+2-2k$ under our hypothesis, the result follows.
\end{proof}

\vspace{.3cm}

Recall that if $L$ is a link represented by a diagram $D$, the span of the Kauffman bracket $\langle D \rangle$ of $D$ is four times the span of the Jones polynomial $V_L(t)$ of $L$. Hence, if $L$ is a link represented by a dealternator connected, $k$-almost alternating diagram with $n$ crossings, then
$$
{\rm span}(V_L(t))\leq n-k.
$$

We finish this section by proving that the Turaev genus (see Remark~\ref{notaTuraev}) agrees with the dealternating number for dealternator connected diagrams. Precisely,

\begin{corollary}\label{GenusDealternatorConnected}
If $D$ is a dealternator connected $k$-almost alternating diagram, then its Turaev genus $g$ is equal to $k$.
\end{corollary}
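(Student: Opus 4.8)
The plan is to combine the standard Euler-characteristic formula for the Turaev surface with Theorem~\ref{AC-case}. Recall the construction of the Turaev surface $F$ of a connected diagram $D$: one views the projection of $D$ as a $4$-valent graph on $S^2=S^2\times\{0\}$, thickens it into a cobordism sitting in $S^2\times[-1,1]$ by inserting a saddle at each crossing, so that the boundary circles at height $+1$ are the components of $s_AD$ and those at height $-1$ are the components of $s_BD$, and finally caps off all of these circles with discs. The resulting $F$ is a closed orientable surface. Reading off its Euler characteristic from this decomposition is then the only computation needed: the saddle cobordism is obtained from a collar of circles by attaching $n$ one-handles, so it contributes $-n$, and capping the $|s_AD|+|s_BD|$ boundary circles with discs adds $|s_AD|+|s_BD|$, giving the well-known relation
$$
\chi(F)=|s_AD|+|s_BD|-n .
$$

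First I would check that a dealternator connected diagram $D$ is connected, so that the formula applies cleanly. Smoothing a crossing replaces a $4$-valent vertex (through which its four edge-ends are mutually joined) by two disjoint arcs, an operation that can only break connections; hence if some smoothed diagram $D_i$ is connected then $D$ is connected as well. Consequently $F$ is a connected closed orientable surface, so $\chi(F)=2-2g$. Equating the two expressions for $\chi(F)$ and substituting the value $|s_AD|+|s_BD|=n+2-2k$ supplied by Theorem~\ref{AC-case} gives
$$
2-2g=(n+2-2k)-n=2-2k ,
$$
and therefore $g=k$, which is the assertion.

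I do not expect a genuine obstacle here: once Theorem~\ref{AC-case} is in hand, the entire content is the bookkeeping behind $\chi(F)=|s_AD|+|s_BD|-n$ together with the connectivity remark. The one point deserving care is that this is precisely the standard value of the Euler characteristic of the Turaev surface used in~\cite{mosca}, which I would either cite directly or re-derive from the cobordism picture above. It is worth emphasizing that the substitution produces an \emph{equality} in a single step: the inequality $g\le k$ of~\cite{Abe} is recovered as one half of the statement, while the new content is the matching lower bound $g\ge k$, and both follow at once.
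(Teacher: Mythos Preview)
Your argument is correct and follows essentially the same route as the paper: invoke the standard Turaev-surface identity $2-2g=|s_AD|+|s_BD|-n$ (which the paper simply cites from~\cite{Peter} in the equivalent form $2g=2+n-(|s_AD|+|s_BD|)$) and substitute Theorem~\ref{AC-case}. Your added verification that a dealternator connected diagram is itself connected is a small but useful clarification that the paper leaves implicit.
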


\begin{proof}
It is well known \cite{Peter} that $2g=2+n-(|s_AD|+|s_BD|)$ where $g$ is the genus of the Turaev surface built from $D$. The result follows then from Theorem~\ref{AC-case}.
\end{proof}

\begin{remark}\label{nota} In light of Corollary~\ref{GenusDealternatorConnected}, the result of Zhu is also a consequence of~\cite[Corollary 7.3]{mosca}.
\end{remark}

\section{When $D$ is both dealternator connected and dealternator reduced} \label{SectionDealternatorConnectedAndDealternatorReduced}

The result of Adams cited in the introduction~\cite[Theorem 4.4]{Adams et al},  was originally proved by writing the Kauffman bracket of $D$ in terms of the Kauffman brackets of the connected, reduced and alternating diagrams $D_i$, $i = 1 \dots , 2^k$. Here we will deduce it from the result of Zhu, using a simple argument by induction.

\vspace{.3cm}

In order to start induction, we need a result for {\it adequate} diagrams~\cite{LT}.

\begin{theorem}\label{Theorem_adequate}{\rm \cite[Proposition 1]{LT}}
Let $D$ be an adequate diagram with $n$ crossings. Then the terms of the highest and lowest degrees in its Kauffman bracket $\langle D\rangle$ are
$$
    (-1)^{|s_AD|-1}A^{M} \qquad \mbox{and} \qquad  (-1)^{|s_BD|-1}A^{m},
$$
where $M=n+2|s_AD|-2$ and $m=-n-2|s_BD|+2$.
\end{theorem}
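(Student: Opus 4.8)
The plan is to expand the Kauffman bracket as a state sum and show that the extreme-degree terms are contributed by the two extreme states $s_A$ and $s_B$ alone, with no cancellation occurring precisely because of adequacy. Recall that
$$
\langle D\rangle = \sum_{s} A^{\,a(s)-b(s)}\,(-A^2-A^{-2})^{\,|sD|-1},
$$
where the sum runs over all $2^n$ states $s$, with $a(s)$ and $b(s)=n-a(s)$ the numbers of $A$- and $B$-smoothings and $|sD|$ the number of circles of $s$. The contribution of a single state $s$ has maximal degree $f(s)-2$, where $f(s):=a(s)-b(s)+2|sD|$, with top coefficient $(-1)^{|sD|-1}$ arising from the leading term $(-A^2)^{|sD|-1}$. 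In particular the all-$A$ state $s_A$, for which $a=n$, $b=0$ and $|sD|=|s_AD|$, contributes a term of maximal degree $f(s_A)-2=n+2|s_AD|-2=M$ and coefficient $(-1)^{|s_AD|-1}$, which is exactly the claimed top term.

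Next I would control $f$ along sequences of single smoothing changes starting from $s_A$. If $s'$ is obtained from $s$ by switching one crossing from its $A$-smoothing to its $B$-smoothing, then $a-b$ drops by $2$, while $|sD|$ changes by exactly $\pm 1$, since switching one smoothing either merges two circles into one or splits one circle into two. Hence $f(s')-f(s)=-2\pm 2\in\{0,-4\}$, so $f$ is non-increasing as we move away from $s_A$. The role of $A$-adequacy is to force the very first step to be strictly decreasing: in $s_AD$ the two arcs meeting at any crossing lie on distinct circles, so switching that crossing merges them, decreasing $|sD|$ by one and giving $f(s')=f(s_A)-4$. Therefore every state $s\neq s_A$, reached from $s_A$ by at least one switch, satisfies $f(s)\leq f(s_A)-4$, so its contribution has maximal degree at most $M-4<M$.

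It follows that $s_A$ is the unique state contributing in degree $M$, so no cancellation is possible and the top term of $\langle D\rangle$ is exactly $(-1)^{|s_AD|-1}A^{M}$. The statement for the lowest term then follows from the mirror symmetry $A\leftrightarrow A^{-1}$, which interchanges $A$- and $B$-smoothings: running the same argument with $B$-adequacy on the all-$B$ state $s_B$ (now minimizing degree) shows that $s_B$ is the unique state contributing in degree $m=-n-2|s_BD|+2$, with coefficient $(-1)^{|s_BD|-1}$.

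The only genuinely delicate point is the claim that switching a single smoothing changes the circle count by exactly $\pm 1$, together with its refinement under adequacy; everything else is bookkeeping on degrees. I would justify the $\pm 1$ claim from the standard local picture at a crossing (the two local arcs either lie on a single circle, which then splits, or on two circles, which then merge), and I would emphasize that $A$-adequacy is exactly the hypothesis ruling out the splitting case at $s_A$, which is what upgrades the monotonicity $f(s')\leq f(s)$ into the strict drop needed to isolate $s_A$ in the top degree and thereby preclude cancellation.
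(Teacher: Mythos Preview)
Your argument is correct and is precisely the classical state-sum proof due to Lickorish and Thistlethwaite, which is the reference the paper cites for this theorem rather than reproving it in detail. The paper does sketch an alternative route in the paragraph following the statement: it invokes the formulae $a_M=(-1)^{|s_AD|-1}I(G_A^D)$ and $a_m=(-1)^{|s_BD|-1}I(G_B^D)$ for the hypothetic extreme coefficients in terms of the independence numbers of the graphs $G_A^D$, $G_B^D$ from~\cite{extreme}, and then observes that adequacy is equivalent to both graphs being empty, whence $I=1$. Your direct approach is more self-contained and makes the role of adequacy transparent (it is exactly what forces the first step away from $s_A$ to be a merge), while the paper's alternative packages the combinatorics into a graph-theoretic invariant that also computes the extreme coefficients for non-adequate diagrams.
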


\vspace{.3cm}

We recall that the number $M$ (resp. $m$) above is the maximal (resp. minimal) possible degree of the Kauffman bracket $\langle D\rangle$ of an arbitrary diagram $D$. Moreover, the degree of any term in the Kauffman bracket is congruent with $m$ (and also with $M$) modulo 4 (see, for instance, \cite{extreme}). In other words, the Kauffman bracket of any diagram $D$ can be written as
\begin{equation}\label{Equation<D>}
\langle D \rangle =a_mA^m + a_{m+4}A^{m+4}+ \dots+ a_{M-4}A^{M-4} + a_MA^M,
\end{equation}
where some of the coefficients could possibly be zero. We will call $a_M$ (resp. $a_m$) the hypothetic maximal (resp. minimal) coefficient of $\langle D\rangle$.

\vspace{.3cm}
For any diagram the values of these coefficients are $a_m=(-1)^{|s_BD|-1}I(G_B^D)$ and $a_M=(-1)^{|s_AD|-1}I(G_A^D)$, where $G_B^D$ and $G_A^D$ are certain graphs, and $I(G)$ denotes certain independence number of the graph $G$ (see \cite{extreme} for details). It turns out that a diagram $D$ is adequate if and only if both graphs $G_B^D$ and $G_A^D$ are empty, which is a nice characterization of adequacy in terms of graphs --compare to \cite[Proposition 2 (ii)]{Morwen}. Since the independence number of the empty graph is one, this gives another proof of Theorem~\ref{Theorem_adequate}.

\vspace{.3cm}

In the particular case we are interested in, the hypothetic extreme coefficients of $\langle D\rangle$ can be described in terms of simpler diagrams, as follows:

\begin{lemma}\label{Lemma a_M a_m}
Let $D$ be a dealternator connected $k$-almost alternating diagram with $n$ crossings. Suppose that $k>0$ and choose in $D$ a dealternator crossing. Let $D_1$ (resp. $D_2$) be the diagram obtained by $A$-smoothing (resp. $B$-smoothing) this dealternator crossing. Then
$$
 |s_AD_1|=|s_AD|, |s_AD_2|=|s_AD|+1, |s_BD_1|=|s_BD|+1 \mbox{ and } |s_BD_2|=|s_BD|.
$$
Moreover, let $a_M$ (resp. $a_{M_1}$, $a_{M_2}$) be the hypothetic maximal coefficient of $\langle D\rangle$ (resp. $\langle D_1\rangle$, $\langle D_2\rangle$). Let $a_m$ (resp. $a_{m_1}$, $a_{m_2}$) be the hypothetic minimal coefficient of $\langle D\rangle$ (resp. $\langle D_1\rangle$, $\langle D_2\rangle$). Then
$$
    a_M=a_{M_1}+a_{M_2} \qquad \mbox{and} \qquad  a_m=a_{m_1}+a_{m_2}.
$$
\end{lemma}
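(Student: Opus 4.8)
The plan is to establish first the four circle-count identities and then read off the two coefficient identities from the Kauffman skein relation at the chosen dealternator. Two of the circle-count identities are immediate from the definition of the extreme states. Smoothing the chosen dealternator according to $A$ and then $A$-smoothing every remaining crossing reproduces exactly the all-$A$ state of $D$, so $s_AD_1$ and $s_AD$ are the very same collection of circles and $|s_AD_1|=|s_AD|$; symmetrically $s_BD_2=s_BD$ and $|s_BD_2|=|s_BD|$.

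For the remaining two identities I would apply Theorem~\ref{AC-case} to $D_1$ and $D_2$. Both are diagrams with $n-1$ crossings that inherit the other $k-1$ dealternators of $D$; switching these makes them alternating (a smoothing of an alternating diagram is again alternating), so each is at most $(k-1)$-almost alternating, and each is dealternator connected because smoothing its inherited dealternators in all possible ways yields diagrams already known to be connected (they occur among the $2^k$ connected diagrams attached to $D$). Let $k_1\le k-1$ be the dealternating number of $D_1$. Theorem~\ref{AC-case} gives $|s_AD_1|+|s_BD_1|=(n-1)+2-2k_1$, and combining this with $|s_AD_1|=|s_AD|$ and $|s_AD|+|s_BD|=n+2-2k$ yields $|s_BD_1|-|s_BD|=-1+2(k-k_1)\ge 1$. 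Since changing a single smoothing alters the number of state circles by exactly one, this difference must equal $+1$ (forcing $k_1=k-1$ as a byproduct), hence $|s_BD_1|=|s_BD|+1$. The identical argument applied to $D_2$ gives $|s_AD_2|=|s_AD|+1$.

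For the coefficient identities I would use the skein relation $\langle D\rangle=A\langle D_1\rangle+A^{-1}\langle D_2\rangle$ together with the degree bookkeeping supplied by the circle counts. With $M=n+2|s_AD|-2$, the hypothetic maximal degrees of $\langle D_1\rangle$ and $\langle D_2\rangle$ are $M_1=(n-1)+2|s_AD_1|-2=M-1$ and $M_2=(n-1)+2|s_AD_2|-2=M+1$. Therefore the highest monomial of $A\langle D_1\rangle$ lies in degree $M_1+1=M$ with coefficient $a_{M_1}$, the highest monomial of $A^{-1}\langle D_2\rangle$ lies in degree $M_2-1=M$ with coefficient $a_{M_2}$, and no other monomial on the right-hand side reaches degree $M$; comparing the coefficient of $A^M$ on both sides gives $a_M=a_{M_1}+a_{M_2}$. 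The minimal identity is strictly analogous: with $m=-n-2|s_BD|+2$ the relations $|s_BD_1|=|s_BD|+1$ and $|s_BD_2|=|s_BD|$ give $m_1=m-1$ and $m_2=m+1$, so the lowest monomials of $A\langle D_1\rangle$ and $A^{-1}\langle D_2\rangle$ both fall in degree $m$, and comparing coefficients of $A^m$ yields $a_m=a_{m_1}+a_{m_2}$.

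The skein bookkeeping is routine once the circle counts are in hand; the step I expect to carry the real content is pinning down the \emph{signs} in the circle-count identities. A purely local argument at a single dealternator only shows that $|s_BD_1|$ and $|s_AD_2|$ differ from $|s_BD|$ and $|s_AD|$ by $\pm 1$, and it is not transparent from that local picture why the change is always $+1$. My resolution routes this through the global input of Theorem~\ref{AC-case} together with the inequality $-1+2(k-k_1)\ge 1$ forced by $k_1\le k-1$, so the main thing to get right is the verification that $D_1$ and $D_2$ are dealternator connected diagrams of dealternating number at most $k-1$, which is precisely what licenses the application of that theorem.
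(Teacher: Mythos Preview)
Your proof is correct and follows the same route as the paper's: the two immediate circle identities, then Theorem~\ref{AC-case} applied to the smoothed diagrams to fix the $\pm 1$ signs, then the skein relation $\langle D\rangle=A\langle D_1\rangle+A^{-1}\langle D_2\rangle$ together with the degree shifts $M_1=M-1$, $M_2=M+1$ (and symmetrically for $m$) to read off the coefficient identities.

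The only difference is a small detour. You introduce the minimal dealternating number $k_1\le k-1$ of $D_1$ and invoke Theorem~\ref{AC-case} with $k_1$, but your verification of dealternator connectedness is carried out for the \emph{inherited} set of $k-1$ dealternators, not for a (possibly different) minimal set of size $k_1$; these hypotheses do not literally match. The paper avoids this by applying Theorem~\ref{AC-case} directly with the inherited $k-1$ dealternators --- its proof uses only that switching the chosen set makes the diagram alternating and that the diagram is dealternator connected relative to that set, not minimality --- obtaining $|s_AD_i|+|s_BD_i|=(n-1)+2-2(k-1)$ at once and forcing $\epsilon=+1$ without the extra bookkeeping. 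Your conclusion $k_1=k-1$ then falls out anyway, so nothing is lost, but invoking the theorem with $k-1$ rather than $k_1$ is the cleaner move.
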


\begin{proof}
The equalities $|s_AD_1|=|s_AD|$ and $|s_BD_2|=|s_BD|$ are obvious, so let us prove that $|s_AD_2|=|s_AD|+1$. Of course $|s_AD_2|=|s_AD|+\epsilon$, where $\epsilon = \pm1$, hence
$$
|s_AD_2|+|s_BD_2|=|s_AD|+|s_BD|+\epsilon .
$$
Since $D$ is a dealternator connected $k$-almost alternating diagram with $n$ crossings, by Theorem~\ref{AC-case} we know that $|s_AD|+|s_BD|=n+2-2k$.

\vspace{.3cm}

Now notice that $D_2$ is a $(k-1)$-almost alternating diagram with $n-1$ crossings. Indeed, switching the other $k-1$ dealternator crossings in $D_2$ is equivalent to first switching all the $k$ dealternator crossings of $D$ and then smoothing the selected dealternator, and any alternating diagram is still alternating after ($A$ or $B$)-smoothing any crossing. Since $D_2$ is also dealternator connected, by Theorem~\ref{AC-case} again   $|s_AD_2|+|s_BD_2|=(n-1)+2-2(k-1)$. It follows that
$$
(n-1)+2-2(k-1)=n+2-2k+\epsilon
$$
hence $\epsilon = 1$.  The equality $|s_BD_1|=|s_BD|+1$ is shown in the analogous way.

\vspace{.3cm}

In order to show that $a_M=a_{M_1}+a_{M_2}$, recall that
$$
\langle D \rangle = A \langle D_1 \rangle + A^{-1}\langle D_2 \rangle,
$$
so we need to show that $M_1=M-1$ and $M_2=M+1$. As $D_1$ and $D_2$ are diagrams with $n-1$ crossings, this is equivalent to $|s_AD_1|=|s_AD|$ and $|s_AD_2|=|s_AD|+1$, so we are done. An analogous argument gives the equality involving $a_m$.
\end{proof}

\vspace{.3cm}
We can now show in a simpler way the result by Adam et al.

\begin{corollary}{\rm \cite[Theorem 4.4]{Adams et al}}
If $D$ is a dealternator connected and dealternator reduced $k$-almost alternating diagram with $n$ crossings, and $k>0$, then
$$
{\rm span}(\langle D \rangle) \leq 4(n-k-2).
$$
\end{corollary}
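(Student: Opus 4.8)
The plan is to sharpen Zhu's bound $\mathrm{span}(\langle D\rangle)\le 4(n-k)$ by exactly $8$ units, by showing that under the extra \emph{dealternator reduced} hypothesis both hypothetic extreme coefficients of $\langle D\rangle$ vanish. Recall that $\langle D\rangle$ is supported on degrees $m,m+4,\dots,M$, so if the hypothetic maximal coefficient satisfies $a_M=0$ then the genuine top degree drops to at most $M-4$, and symmetrically $a_m=0$ forces the genuine bottom degree up to at least $m+4$. Since $M-m=2n+2(|s_AD|+|s_BD|)-4$ and Theorem~\ref{AC-case} gives $|s_AD|+|s_BD|=n+2-2k$, we have $M-m=4(n-k)$; hence once we know $a_M=a_m=0$ we get $\mathrm{span}(\langle D\rangle)\le (M-4)-(m+4)=4(n-k)-8=4(n-k-2)$. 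The whole statement therefore reduces to the claim that $a_M=0$ and $a_m=0$ for every dealternator connected and dealternator reduced $k$-almost alternating diagram with $k\ge 1$.

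I would prove this claim by induction on $k$, using Lemma~\ref{Lemma a_M a_m}. For the base case $k=1$, I choose the unique dealternator and form $D_1$ and $D_2$ by $A$- and $B$-smoothing it. These are $0$-almost alternating, i.e.\ alternating, and they are connected and reduced precisely because $D$ is dealternator connected and dealternator reduced. A reduced alternating diagram is adequate \cite{LT}, so Theorem~\ref{Theorem_adequate} applies and gives $a_{M_i}=(-1)^{|s_AD_i|-1}$ and $a_{m_i}=(-1)^{|s_BD_i|-1}$. Feeding in the circle-count relations of Lemma~\ref{Lemma a_M a_m}, namely $|s_AD_1|=|s_AD|$ and $|s_AD_2|=|s_AD|+1$ (and the symmetric ones for $s_B$), the two summands carry opposite signs, so $a_M=a_{M_1}+a_{M_2}=0$ and likewise $a_m=a_{m_1}+a_{m_2}=0$.

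For the inductive step $k\ge 2$, I again pass to $D_1$ and $D_2$. By the argument in the proof of Lemma~\ref{Lemma a_M a_m}, each is a dealternator connected $(k-1)$-almost alternating diagram. The point that genuinely needs checking is that they remain \emph{dealternator reduced}: this holds because the $2^{k-1}$ diagrams obtained from $D_i$ by smoothing its remaining dealternators in all ways are exactly those $2^k$ smoothings of $D$ in which the chosen dealternator is resolved in one fixed way, and all of the latter are reduced since $D$ is dealternator reduced. Thus the induction hypothesis applies to both $D_1$ and $D_2$, giving $a_{M_1}=a_{M_2}=0$ and $a_{m_1}=a_{m_2}=0$; Lemma~\ref{Lemma a_M a_m} then yields $a_M=a_{M_1}+a_{M_2}=0$ and $a_m=0$, completing the induction and hence the proof.

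The main obstacle I anticipate is not the final degree bookkeeping but the two bridging facts that make the induction run. First, the base case genuinely relies on the classical fact that a reduced alternating diagram is adequate, so that the extreme coefficients are $\pm 1$ and can be seen to cancel through the parity relations of Lemma~\ref{Lemma a_M a_m}; without adequacy one only has inequalities on degrees and the cancellation argument breaks. Second, one must verify that smoothing a single dealternator preserves \emph{both} "dealternator connected" and "dealternator reduced," since only then does the induction hypothesis apply to $D_1$ and $D_2$. Once these are in place, the vanishing of $a_M$ and $a_m$ propagates mechanically up the induction and the sharpened span bound is immediate.
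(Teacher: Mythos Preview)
Your proposal is correct and follows essentially the same route as the paper: reduce the statement to showing $a_M=a_m=0$, then induct on $k$ using Lemma~\ref{Lemma a_M a_m}, with the base case $k=1$ handled via adequacy of the reduced alternating smoothings $D_1,D_2$ and the sign cancellation from the parity relations. If anything, you are slightly more explicit than the paper in justifying that $D_1$ and $D_2$ inherit the dealternator reduced property, which the paper simply asserts.
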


\begin{proof}
By Theorem~\ref{AC-case}, we know that the hypothetical maximal value of $\mbox{span}(\langle D\rangle)$ is $M-m = 2n+2(|s_AD|+|s_BD|)-4 = 4(n-k)$. But as the Kauffman bracket has the expression~(\ref{Equation<D>}) above, it follows that the above bound will decrease by $8$ if we show that $a_M = a_m =0$.

\vspace{.3cm}

We proceed by induction on $k$. Suppose that $k=1$, that is, $D$ has only one dealternator. Denote $D_1$ (resp. $D_2$) the diagram obtained by $A$-smoothing (resp. $B$-smoothing) this dealternator crossing. By Lemma~\ref{Lemma a_M a_m}, one has $a_M=a_{M_1}+a_{M_2}$. But $D_1$ and $D_2$ are alternating, reduced diagrams, thus they are adequate~\cite{LT}. Hence Theorem~\ref{Theorem_adequate} and Lemma~\ref{Lemma a_M a_m} tell us that $a_{M_1}=(-1)^{|s_AD_1|-1}=(-1)^{|s_AD|-1}$, and on the other hand $a_{M_2}=(-1)^{|s_AD_2|-1}=(-1)^{|s_AD|}$. Therefore $a_M= a_{M_1}+a_{M_2}=0$.  The analogous argument shows that $a_m=a_{m_1}+a_{m_2}=0$, so the case $k=1$ holds.

\vspace{.3cm}

Suppose now that $k>1$ and that the result holds for diagrams with less than $k$ dealternators. Choose one dealternator of $D$ and apply $A$-smoothing (resp. $B$-smoothing)  to create the diagram $D_1$ (resp. $D_2$). Notice that both $D_1$ and $D_2$ are dealternator connected and dealternator reduced $(k-1)$-almost alternating diagrams with $n-1$ crossings. By induction hypothesis, $a_{M_1}=a_{m_1}=a_{M_2}=a_{m_2}=0$. Hence $a_M=a_{M_1}+a_{M_2}=0$ and $a_m=a_{m_1}+a_{m_2}=0$, so the result follows.
\end{proof}

\section{The general case}\label{SectionGeneralCase}

If a diagram $D$ is not dealternator connected, the graph $\Gamma_D$ does not determine a triangulation of the surface $S$, since at least one of the regions determined by $\Gamma_D$ is not homeomorphic to a disc. Nevertheless, these regions (the connected components of $S\backslash \Gamma_D$) admit a black and white colouring which extends the chessboard colouring of $S^2$: It suffices to colour the bands attached during the surgery in the natural way. Notice that, with this colouring, the components of $s_AD$ are the boundaries of the white regions, and the components of $s_BD$ are the boundaries of the black regions. Notice also that these regions have genus~$0$ (they are discs with holes), so the number of components of their boundary is determined by the rank of their first homology group (the number of holes).

\vspace{.3cm}

Therefore, the circle number $|s_AD|+|s_BD|$ is determined by the number of regions in $S\backslash \Gamma_D$, together with the ranks of their first homology groups. More precisely:

\begin{theorem}\label{Theorem_rs}
Let $D$ be a $k$-almost alternating diagram with $n$ crossings, and let $S$ and $\Gamma_D$ be defined as in Section~\ref{SectionSurface}. Let $R_1,\ldots,R_r$ be the connected components of $S\backslash \Gamma_D$, and let $s_i$ be the rank of the first homology group of $R_i$. Finally, denote $s=s_1+\cdots + s_r$.  Then
$$
   |s_AD|+|s_BD| = r+s.
$$
\end{theorem}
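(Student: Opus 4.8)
The plan is to compute the total number of boundary circles of the regions $R_1,\dots,R_r$ and to identify this total with the circle number $|s_AD|+|s_BD|$ via the black-and-white colouring.

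First I would pin down the number of boundary circles of a single region. As observed just before the statement, each $R_i$ is a connected surface of genus $0$ with nonempty boundary, that is, a disc with $s_i$ holes. Such a surface satisfies $H_2(R_i)=0$ and $H_0(R_i)=\mathbb{Z}$, so its Euler characteristic is $1-s_i$; comparing this with the Euler characteristic $2-b_i$ of a sphere with $b_i$ open discs removed gives $b_i=s_i+1$. Hence $R_i$ has exactly $s_i+1$ boundary circles, and summing over all regions the total number of boundary circles equals $\sum_{i=1}^r (s_i+1)=s+r$.

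It then remains to see that this total is precisely $|s_AD|+|s_BD|$, with no circle counted twice. Here I would use the colouring of the regions that extends the chessboard colouring of $S^2$, together with the fact recalled above that the components of $s_AD$ are the boundaries of the white regions and the components of $s_BD$ are the boundaries of the black regions. The key point is that the colouring is a genuine chessboard colouring on $S$: every edge of $\Gamma_D$, and therefore every boundary circle, has a white region on one side and a black region on the other. Consequently each component of $s_AD$ is the boundary circle of exactly one white region, and each component of $s_BD$ is the boundary circle of exactly one black region. This sets up a bijection between the boundary circles of the white regions and the components of $s_AD$, and likewise for the black regions and $s_BD$, so that $|s_AD|=\sum_{R_i\text{ white}}(s_i+1)$ and $|s_BD|=\sum_{R_i\text{ black}}(s_i+1)$. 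Adding these two equalities yields $|s_AD|+|s_BD|=\sum_{i=1}^r (s_i+1)=s+r$.

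The main obstacle I anticipate is justifying the no-double-counting step, that is, confirming that the two sides of every boundary circle carry opposite colours even across the bands glued during the surgery at the dealternators. Once the chessboard property is verified globally on $S$, the remaining argument is the elementary Euler-characteristic bookkeeping carried out above.
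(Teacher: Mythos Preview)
Your proposal is correct and follows essentially the same route as the paper: both arguments identify $|s_AD|+|s_BD|$ with the total number of boundary components of the regions $R_i$, then use that a disc with $s_i$ holes has $s_i+1$ boundary circles to obtain $\sum_i(s_i+1)=r+s$. Your treatment is simply more explicit than the paper's, which compresses the whole argument into two lines by invoking the discussion preceding the theorem; in particular, the no-double-counting concern you flag is exactly what the paper handles implicitly via the chessboard property when it asserts that the components of $s_AD$ (resp.\ $s_BD$) are the boundaries of the white (resp.\ black) regions.
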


\begin{proof}
As we mentioned above, the circle number of $D$ is precisely the number of boundary components of all $R_i$'s. Since $R_i$ is a disc with $s_i$ holes, the number of boundary components of $R_i$ is precisely $s_i+1$, hence
$$
|s_AD|+|s_BD|=\sum_{i=1}^r(s_i+1)=r+\sum_{i=1}^r s_i= r+s.
$$
\end{proof}

\vspace{.3cm}

Theorem~\ref{Theorem_rs} gives a description of the circle number of a diagram $D$ in terms of the number $r$ of regions in $S\backslash \Gamma_D$, and the shape of each region. We can simplify this description: Using that the Euler characteristic of $S$ is $2-3k$, we can describe the circle number of $D$ in terms of $n$, $r$ and the number $k$ of dealternators.

\begin{theorem}\label{Theorem_rk}
Let $D$ be a $k$-almost alternating diagram with $n$ crossings, and let $S$ and $\Gamma_D$ be defined as in Section~\ref{SectionSurface}. Let $r$ be the number of connected components of $S\backslash \Gamma_D$. Then
$$
|s_AD| + |s_BD| = 2k + 2r - n - 2.
$$
\end{theorem}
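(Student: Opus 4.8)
The plan is to compute the Euler characteristic of $S$ in a way that brings in the number $r$ of regions, and then to combine the outcome with Theorem~\ref{Theorem_rs}, which already supplies $|s_AD|+|s_BD| = r+s$. The only obstruction to applying Euler's formula to $\Gamma_D$ directly is that, as emphasized in Section~\ref{SectionSurface}, $\Gamma_D$ is not in general a triangulation of $S$: the regions $R_i$ are discs with $s_i$ holes rather than genuine discs. So the first step is to upgrade $\Gamma_D$ to an honest cell decomposition whose faces are all discs, keeping careful track of how many cells are added.

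Concretely, inside each region $R_i$ (a disc with $s_i$ holes, hence with $s_i+1$ boundary circles) I would add $s_i$ disjoint arcs, each running between two distinct boundary components, with endpoints placed at vertices already present in $\Gamma_D$ (every boundary circle contains such a vertex, since each edge of $\Gamma_D$ has vertex endpoints). Adding one such arc merges two boundary circles into one and leaves the region connected, so after $s_i$ arcs the region $R_i$ becomes a single disc. Performing this in every region adds $s = s_1 + \cdots + s_r$ edges in total, no new vertices, and leaves the number of faces equal to $r$, now all discs. We thus obtain a triangulation of $S$ with $n+3k$ vertices, $2n+4k+s$ edges, and $r$ faces.

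Applying Euler's formula and using $\chi(S) = 2-3k$ then gives
\[
(n+3k) - (2n+4k+s) + r = 2-3k,
\]
which simplifies to $r - s = n + 2 - 2k$, that is, $s = r - n - 2 + 2k$. Substituting this into the identity $|s_AD|+|s_BD| = r+s$ of Theorem~\ref{Theorem_rs} yields
\[
|s_AD|+|s_BD| = r + (r - n - 2 + 2k) = 2k + 2r - n - 2,
\]
which is the asserted formula. (As a consistency check, a dealternator connected diagram has all regions discs, so $s=0$, forcing $r = n+2-2k$ and recovering Theorem~\ref{AC-case}.)

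The step demanding the most care is the cell-surgery of the second paragraph: one must verify that the added arcs genuinely reduce each region to a disc, so that Euler's formula is legitimately applicable, while contributing exactly $s$ edges and creating no extra faces. An alternative, and perhaps cleaner, way to reach the same relation is to bypass the explicit cutting and invoke additivity of the Euler characteristic over the decomposition $S = \Gamma_D \sqcup \bigsqcup_{i} R_i$: each $R_i$ is a genus-$0$ surface with $s_i+1$ boundary circles, so contributes $\chi(R_i) = 1 - s_i$, while $\chi(\Gamma_D) = (n+3k)-(2n+4k) = -n-k$, whence $2-3k = -n-k + \sum_{i}(1-s_i) = -n-k+r-s$, giving once more $r-s = n+2-2k$ and the same conclusion.
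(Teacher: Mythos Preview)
Your argument is correct and follows the same overall strategy as the paper---upgrade $\Gamma_D$ to a genuine cell decomposition of $S$, apply Euler's formula, and combine with Theorem~\ref{Theorem_rs}---but you choose a different decomposition. The paper adds, for each region $R_i$, the two free edges of $s_i$ suitably chosen bands (so $2s$ new edges in total), which \emph{subdivides} $R_i$ into $s_i+1$ discs; the resulting triangulation then has exactly $|s_AD|+|s_BD|$ faces, and Euler gives $|s_AD|+|s_BD|=n+2s-2k+2$ before Theorem~\ref{Theorem_rs} is invoked to eliminate $s$. You instead add $s_i$ arcs joining distinct boundary components of $R_i$ (so $s$ new edges in total), which \emph{collapses} $R_i$ to a single disc; your triangulation has $r$ faces, and Euler gives $r-s=n+2-2k$, from which the formula follows. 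Your version is slightly leaner (half as many added edges, and the face count is directly $r$), while the paper's has the pleasant feature that the added edges are canonically supplied by the band structure and that the face count is already the circle number. Your alternative via additivity of the Euler characteristic over the stratification $S=\Gamma_D\sqcup\bigsqcup_i R_i$ is also valid and bypasses the explicit surgery entirely.
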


\begin{proof}
The graph $\Gamma_D$ determines a decomposition of the surface $S$, which is not a triangulation, in general. In order to transform it into a triangulation (in which we admit polygonal faces), we just need to add some edges to the graph, in the following way.

\vspace{.3cm}

As above, suppose that $R_1,\ldots,R_r$ are the connected components of $S\backslash \Gamma_D$, $s_i$ is the rank of the first homology group of $R_i$ and $s=s_1+\cdots + s_r$. 

\vspace{.3cm}

Suppose now that a region $R_i$ is not homeomorphic to a disc, that is, $s_i~>~0$. Notice that $R_i$ is constructed from several discs (regions of the alternating diagram associated to $D$), joined by some bands. Each band corresponds to a dealternator, and contributes with four vertices to the boundary of $R_i$. Notice also that the attachment of a band can increase the rank of the first homology group of a region by at most one. Hence, there are at least $s_i$ bands involved in the construction of $R_i$.

\vspace{.3cm}

Let us show, by induction on the number $s_i$, that there are $s_i$ bands in $R_i$ such that, if we remove them, the remaining region is a single disc. Indeed, if $s_i=0$ there is nothing to show. Suppose that $s_i>0$. Notice that if we remove a band whose four vertices are in the same boundary component of $R_i$, then the resulting region is a disjoint union of ``discs with bands", and the total rank has not been modified. We know that removing all bands we get a family of discs, therefore at some point the rank must decrease, and this means that there is some band whose vertices belong to two different boundary components of $R_i$. Removing this band we decrease the rank of $R_i$ by one, and the claim follows by induction hypothesis.

\vspace{.3cm}

Consider the $s_i$ bands given by the previous claim. Two edges of each band belong to $\Gamma_D$. We now add to our graph the other two edges of each band. In this way, the region $R_i$ has been subdivided into $s_i+1$ discs, the number of vertices is the same, and the number of edges has increased by $2s_i$.  Applying the same procedure to each region, we obtain a triangulation of $S$ whose number of vertices is $n+3k$, whose number of edges is the number of edges of $\Gamma_D$ plus $2s$, that is, $2n+4k+2s$, and whose number of faces is precisely the circle number of $D$, since each region $R_i$ yields $s_i+1$ faces, which is precisely the number of circles associated to $R_i$.  Therefore, as the Euler characteristic of $S$ is $2-3k$, we obtain
$$
   (n+3k)  - (2n+4k+2s) + |s_AD|+|s_BD| = 2-3k.
$$
That is,
$$
   |s_AD|+|s_BD| = n + 2s -2k +2.
$$
Applying Theorem~\ref{Theorem_rs}, one has $2s = 2(|s_AD|+|s_BD|)-2r$, hence
$$
    |s_AD|+|s_BD| = n + 2(|s_AD|+|s_BD|)-2r -2k+2,
$$
so
$$
   |s_AD|+|s_BD|= 2k+2r-n-2,
$$
as we wanted to show.
\end{proof}

\vspace{.3cm}

We remark that Theorem~\ref{AC-case} is a corollary of this result, since in the case of a dealternator connected diagram, one has $r=|s_AD|+|s_BD|$.

\vspace{.3cm}

\noindent {\bf Example.} Figure \ref{Fcorona} exhibits the pretzel diagram $D=P(4,-3,3)$ and its corresponding families of circles $s_AD$ and $s_BD$. The diagram $D$ is a non-dealternator connected $3$-almost alternating diagram, with 10 crossings. In this case, the graph $\Gamma_D$ defined in Section~\ref{SectionSurface} does not yield a genuine triangulation of $S$, since one of the faces (regions of $S\backslash \Gamma_D$) is not a disc, as we will now see.

\begin{figure}[ht!]
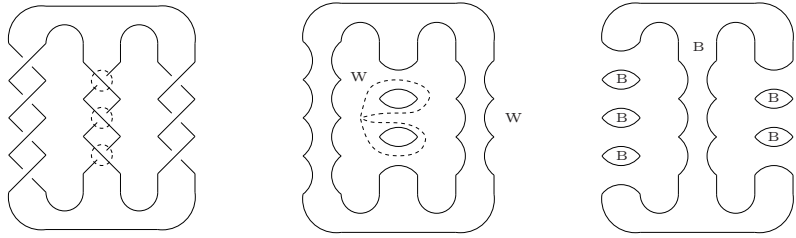

\labellist
\small
\pinlabel {\tiny \rm W} at 340 149
\pinlabel {\tiny \rm W} at 490 109

\pinlabel {\tiny \rm B} at 595 147
\pinlabel {\tiny \rm B} at 595 109
\pinlabel {\tiny \rm B} at 595 72

\pinlabel {\tiny \rm B} at 669 178

\pinlabel {\tiny \rm B} at 742 128
\pinlabel {\tiny \rm B} at 742 91

\endlabellist
  \begin{center}
\pretzel \qquad \qquad \Apretzel \qquad \qquad \Bpretzel
  \end{center}
\caption{Diagrams $D$, $s_AD$ and $s_BD$; when $A$-smoothing, a region which is not a disc emerges}\label{Fcorona}
\end{figure}

\vspace{0.3cm}

We remark that the white (resp. black) faces of $\Gamma_D$ can be drawn on the plane (actually on the sphere), as they can be obtained from the white (resp. black) regions in the chessboard colouring of the alternating diagram corresponding to~$D$, by joining some regions along the dealternators. That is, each dealternator can be seen as a pair of {\it bridges}, one of them connecting white regions, the other one black regions. 

\vspace{0.3cm}

More precisely, we can colour the faces of $S\backslash \Gamma_D$ as follows. First, choose any dealternator and consider the region of the plane that encloses the crossing point corresponding to that dealternator, in both the diagrams $s_AD$ and $s_BD$. These two regions should be considered white and black respectively. Now extend the colour white in $s_AD$ following the chessboard colouring fashion, using a neutral colour as second colour. Analogously, extend the colour black in $s_BD$ following the chessboard colouring fashion, using a neutral colour as second colour. In this way, the white and black regions of both pictures (drawn in $S^2$) are copies of the white and black regions of $S\backslash \Gamma_D$.


\vspace{.3cm}

In our example, one of the white faces is a disc with two holes, while all the other faces are discs. In the setting of Theorem~\ref{Theorem_rs}, we have $r=8$ and $s=2$, so we can check that
$$
  4+6=|s_AD|+|s_BD|= r+s=8+2 =10.
$$
And we can also check the formula of Theorem~\ref{Theorem_rk}:
$$
  4+6=|s_AD|+|s_BD| = 2k+2r-n-2= 6+16-10-2 = 10.
$$
This gives 36 as an upper bound for the span of the Kauffman bracket of $D$. According to \cite{pretzel}, we have ${\rm span}(\langle P(4,-3,3)\rangle)=28$.

\vspace{.3cm}

In this paper we have determined the value of the circle number $|s_AD|+|s_BD|$ of any diagram $D$, obtaining an exact estimation of the known upper bound $2n+2(|s_AD|+|s_BD|)-4$ of the span of the Kauffman bracket of $D$. Nevertheless, our example shows the necessity of sharpening this upper bound in some sense. From the approach given in this paper one is tempted to replace, in the above upper bound, the circle number by the number of regions $r$ of $S\backslash \Gamma_D$. This would give the better upper bound $2n+2r-4=32$ in our example. But in general this fails to be an upper bound of the span of the Kauffman bracket, as can be seen analyzing the diagram listed as $K11n151$ in the Hoste-Thistlethwaite Knot Table \cite{knotatlas}.

\end{document}